\newtheorem{theorem}{Theorem}[section]
\begin{document}

	\title[On the Parity of  $\phi_k(n)$]{On the Parity of the Generalized Frobenius Partition Functions $\phi_k(n)$}

	\author{George E. Andrews}
	\address{Department of Mathematics, The Pennsylvania State University, University Park, PA 16802, USA}
	\email{gea1@psu.edu}

	\author{James A. Sellers}
	\address{Department of Mathematics and Statistics, University of Minnesota Duluth, Duluth, MN 55812, USA}
	\email{jsellers@d.umn.edu}

    \author{Fares Soufan}
    \address{Department of Mathematics and Statistics, University of Minnesota Duluth, Duluth, MN 55812, USA}
	\email{soufa005@umn.edu}

	\subjclass[2010]{11P83, 05A17}
	
	\keywords{congruences, partitions, generalized Frobenius partitions, generating functions}
	
	\maketitle
	
 \begin{abstract}
 In his 1984 Memoir of the American Mathematical Society, George Andrews defined two families of functions, $\phi_k(n)$ and $c\phi_k(n),$ which enumerate two types of combinatorial objects which Andrews called generalized Frobenius partitions.  As part of that Memoir, Andrews proved a number of Ramanujan--like congruences satisfied by specific functions within these two families.  In the years that followed, numerous other authors proved similar results for these functions, often with a view towards a specific choice of the parameter $k.$  In this brief note, our goal is to identify an {\bf infinite} family of values of $k$ such that $\phi_k(n)$ is even for all $n$ in a specific arithmetic progression; in particular, our primary goal in this work is to prove that, for all positive integers $\ell,$ all primes $p\geq 5,$ and all values $r,$ $0 < r < p,$ such that $24r+1$ is a quadratic nonresidue modulo $p,$ 
 $$
 \phi_{p\ell-1}(pn+r) \equiv 0 \pmod{2}
 $$
 for all $n\geq 0.$
 Our proof of this result is truly elementary, relying on a lemma from Andrews' Memoir, classical $q$--series results, and elementary generating function manipulations. Such a result, which holds for infinitely many values of $k,$ is rare in the study of arithmetic properties satisfied by generalized Frobenius partitions, primarily because of the unwieldy nature of the generating functions in question. \end{abstract}

\section{Introduction}
In his 1984 AMS Memoir, George Andrews 
\cite{AndMem} 
defined two families of combinatorial objects known as {\it generalized Frobenius partitions}.  These are generalizations of the two--rowed arrays, often known as Frobenius symbols, which arise from considering the rows and columns of the Ferrers graph of an ordinary partition once the Durfee square has been ``removed''.  In the process, Andrews defined two families of functions, $\phi_k(n)$ and $c\phi_k(n),$ as the number of generalized Frobenius partitions of weight $n$ in these two families of objects, respectively.  In 
\cite{AndMem}, Andrews studied these functions $\phi_k(n)$ and $c\phi_k(n)$ from several perspectives, including proving a number of Ramanujan--like congruences satisfied by these functions.  This, in turn, led a number of others to extend Andrews' congruence results.  

While there exists an extensive literature on the subject of congruences satisfied by generalized Frobenius partition functions, our focus in this note will be on parity results.  We highlight here that a number of authors have proven congruence results with even moduli for these functions; see, for example, the work of 
Andrews 
\cite[Theorem 10.2]{AndMem}, 
Baruah and Sarmah 
\cite{BarSar1, BarSar2},   
Chan, Wang, and Yang 
\cite{ChanWangYang}, 
Cui and Gu, 
\cite{CuiGu}, 
Cui, Gu, and Huang 
\cite{CuiGuHuang},  
and 
Jameson and Wieczorek 
\cite{JamWie}
where specific congruence results with even moduli are proved.  Several additional papers involving congruence results for generalized Frobenius partitions, but with odd moduli, also appear in the literature.  

What is striking about many of the works cited above is that the authors focus specifically on a particular value of the parameter $k$ in order to manipulate the generating function in question to prove their results.  One exception to this rule is Andrews' Theorem 10.2 in 
\cite[Theorem 10.2]{AndMem}:  

\begin{theorem}
Let $p$ be prime and and let $r$ be an integer such that $0<r<p.$ For all $n\geq 0,$ $$c\phi_p(pn+r) \equiv 0\pmod{p^2}.$$ 
\end{theorem}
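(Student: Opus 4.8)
The plan is to build on the theta--quotient form of the generating function recorded earlier in the Memoir, namely
$$\sum_{n\ge 0}c\phi_p(n)\,q^n=\frac{\Theta_p(q)}{(q;q)_\infty^{\,p}},\qquad \Theta_p(q):=\sum_{\substack{(x_1,\dots,x_p)\in\mathbb Z^{p}\\ x_1+\cdots+x_p=0}}q^{\frac12(x_1^2+\cdots+x_p^2)},$$
so that $\Theta_p$ is the theta series of the root lattice $A_{p-1}$ (note $\sum x_i^2\equiv\sum x_i\equiv 0\pmod 2$, so the exponents are integers, and $p=1$ recovers $1/(q;q)_\infty$). The entire argument will come down to understanding the two factors of this quotient modulo $p$ and then modulo $p^2$ in arithmetic progressions.

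For the mod--$p$ statement I would use symmetry. The cyclic shift $\sigma(x_1,\dots,x_p)=(x_2,\dots,x_p,x_1)$ preserves both $\sum x_i=0$ and $\sum x_i^2$, and its only fixed point on the sum--zero lattice is the origin (a fixed vector has all coordinates equal, hence all zero). Thus every vector of positive norm lies in a free $C_p$--orbit, the representation numbers of $\Theta_p$ are divisible by $p$ in every positive degree, and $\Theta_p(q)\equiv 1\pmod p$. Together with $(q;q)_\infty^{\,p}\equiv(q^p;q^p)_\infty\pmod p$ this already gives $c\phi_p(pn+r)\equiv0\pmod p$. To upgrade, write $\Theta_p=1+pA$ and $(q;q)_\infty^{\,p}=(q^p;q^p)_\infty(1+pB)$ with $A,B\in\mathbb Z[[q]]$, and invert modulo $p^2$ to get
$$\sum_{n\ge 0}c\phi_p(n)\,q^n\equiv\frac{1+p(A-B)}{(q^p;q^p)_\infty}\pmod{p^2}.$$
Extracting $q^{pn+r}$ kills the leading term, and since $1/(q^p;q^p)_\infty$ is supported on $q^{p\mathbb Z}$ it remains to show $[q^N](A-B)\equiv0\pmod p$ for every $N\not\equiv0\pmod p$; equivalently $\Theta_p(q)\equiv(q;q)_\infty^{\,p}/(q^p;q^p)_\infty\pmod{p^2}$ in each nonzero residue class modulo $p$. (As a check, for $p=5$, $N=1$ both sides give $20\equiv-5\pmod{25}$.)

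For this last congruence I would expand both $p$th powers by the multinomial theorem modulo $p^2$: writing $\vartheta(z,q)=\sum_{x\in\mathbb Z}z^xq^{x^2/2}$ so that $\Theta_p(q)=[z^0]\,\vartheta(z,q)^p$, and using Euler's pentagonal number theorem for $(q;q)_\infty$, every off--diagonal multinomial coefficient $\binom{p}{k_1,\dots,k_m}$ with all $k_i<p$ has $p$--adic valuation exactly $1$, and Wilson's theorem gives $\tfrac1p\binom{p}{k_1,\dots,k_m}\equiv-1/(k_1!\cdots k_m!)\pmod p$. Since no pure $p$th--power (diagonal) term can land in a nonzero residue class, this yields
$$[q^N]A\equiv-\!\!\sum_{\substack{\sum_x k_x=p\\ \sum_x xk_x=0\\ \frac12\sum_x x^2k_x=N}}\frac{1}{\prod_x k_x!}\pmod p,$$
while $[q^N]B$ is the analogous $p(i)$--weighted sum of pentagonal corrections $-\sum(-1)^{\sum_m mj_m}/\prod_m j_m!$ over $\sum_m j_m=p$ and $\sum_m j_m\,\omega(m)=N-pi$, where $\omega(m)=m(3m-1)/2$. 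The main obstacle is precisely to prove that these two Wilson--weighted lattice--point counts agree modulo $p$ for every $N\not\equiv0\pmod p$. My intended attack is to find an auxiliary $C_p$--action on each configuration space that preserves the $q$--exponent and the Wilson weight, so that the non--fixed configurations contribute in blocks of $p$ and one is left comparing only the fixed--configuration contributions of the two sides; the structural reason to expect a clean match is that $\Theta_p$ and $\eta(\tau)^p/\eta(p\tau)=(q;q)_\infty^{\,p}/(q^p;q^p)_\infty$ are both modular forms of weight $(p-1)/2$ on $\Gamma_0(p)$ reducing to the same form modulo $p$. Converting that mod--$p$ agreement into the required mod--$p^2$ agreement off the progression $q^{pn}$ is the delicate heart of the proof.
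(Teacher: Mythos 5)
First, a point of comparison: the paper does not prove this statement at all; it is quoted as background, with the proof credited to Andrews' Memoir \cite[Theorem 10.2]{AndMem}, so your attempt must be measured against Andrews' original argument (which Garvan and Sellers adapt to prove Theorem \ref{GarSelmainthm}). Your setup is correct: the generating function is indeed $\Theta_p(q)/(q;q)_\infty^p$ with $\Theta_p$ the $A_{p-1}$ theta series, your cyclic--orbit proof that $\Theta_p\equiv 1\pmod p$ is fine, and your reduction of the theorem to the claim that $\Theta_p(q)\equiv (q;q)_\infty^p/(q^p;q^p)_\infty\pmod{p^2}$ on exponents not divisible by $p$ (together with the check $20\equiv -5\pmod{25}$) is accurate. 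But that claim is the whole theorem in disguise, and you leave it unproved --- as you yourself acknowledge. The Wilson--weighted multinomial bookkeeping, the hoped--for auxiliary $C_p$--action, and the observation that both sides are weight--$(p-1)/2$ forms on $\Gamma_0(p)$ agreeing modulo $p$ do not close the gap: congruence of two modular forms modulo $p$ never by itself upgrades to a congruence modulo $p^2$, on any set of exponents, so your final paragraph restates the difficulty rather than attacking it. As written, the proposal establishes only the modulo $p$ statement.

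The missing idea is to factor over $p$-th roots of unity and use the arithmetic of $\mathbb{Z}[\zeta]$, $\zeta=e^{2\pi i/p}$. Write $f(z)=\prod_{n\geq 0}(1+zq^{n+1})(1+z^{-1}q^n)$, so that $C\Phi_p(q):=\sum_{n\geq 0}c\phi_p(n)q^n$ is the constant term in $z$ of $f(z)^p$. Since $\prod_{j=0}^{p-1}(1+\zeta^jw)=1+w^p$, one has
$$\prod_{j=0}^{p-1}f(\zeta^jz)=\prod_{n\geq 0}(1+z^pq^{p(n+1)})(1+z^{-p}q^{pn}),$$
whose constant term in $z$ is $1/(q^p;q^p)_\infty$ by the triple product. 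Expanding $f(z)=(q;q)_\infty^{-1}\sum_m z^mq^{m(m+1)/2}$ in both expressions and extracting constant terms over tuples $\mathbf{m}=(m_0,\dots,m_{p-1})$ with $\sum_j m_j=0$ gives
$$C\Phi_p(q)-\frac{1}{(q^p;q^p)_\infty}=\frac{1}{(q;q)_\infty^p}\sum_{\mathbf{m}}\bigl(1-\zeta^{L(\mathbf{m})}\bigr)q^{w(\mathbf{m})},\qquad L(\mathbf{m})=\sum_j jm_j,\quad w(\mathbf{m})=\sum_j \tfrac{m_j(m_j+1)}{2}.$$
Because $\sum_j m_j=0$, both $w$ and $L\bmod p$ are invariant under the cyclic shift, whose unique fixed point $\mathbf{0}$ contributes nothing; so every coefficient of the numerator is $p$ times a sum of terms $1-\zeta^{L}$, hence lies in the ideal $p(1-\zeta)\mathbb{Z}[\zeta]$, and this persists after dividing by $(q;q)_\infty^p$. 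A \emph{rational integer} in $p(1-\zeta)\mathbb{Z}[\zeta]$ is divisible by $p^2$: its $(1-\zeta)$--valuation is at least $p$ and must be a multiple of $p-1$ (since $(p)=(1-\zeta)^{p-1}$ up to units), hence is at least $2(p-1)$. Therefore $C\Phi_p(q)\equiv 1/(q^p;q^p)_\infty\pmod{p^2}$ coefficientwise, and extracting $q^{pn+r}$ with $0<r<p$ yields the theorem. This $(1-\zeta)$--divisibility --- strictly finer information than divisibility by $p$ --- is exactly what your orbit count and modular--forms heuristic cannot supply, and it is the heart of Andrews' proof.
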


Another exception to focusing on a particular value of the subscript $k$ appears in the work of Garvan and Sellers 
\cite[Theorem 2.2]{GarSel}
where the authors prove the following theorem:  

\begin{theorem}
\label{GarSelmainthm}
Let $p$ be prime and let $r$ be an integer such that $0<r<p.$  If 
$$
c\phi_k(pn+r) \equiv 0\pmod{p}
$$ 
for all $n\geq 0,$ then 
$$
c\phi_{pN+k}(pn+r) \equiv 0\pmod{p}
$$ 
for all $N\geq 0$ and $n\geq 0.$
\end{theorem}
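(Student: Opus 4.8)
The plan is to work throughout with the generating-function representation of $c\phi_k$ recorded in Andrews' Memoir,
$$\sum_{n\ge 0}c\phi_k(n)q^n=\frac{1}{(q;q)_\infty^{k}}\sum_{\substack{(m_1,\dots,m_k)\in\mathbb{Z}^k\\ m_1+\cdots+m_k=0}}q^{\frac12(m_1^2+\cdots+m_k^2)},$$
which, writing $\theta(z,q)=\sum_{m\in\mathbb{Z}}z^mq^{m^2/2}$ and $g(z,q)=\theta(z,q)/(q;q)_\infty$, takes the compact form $C\Phi_k(q):=\sum_n c\phi_k(n)q^n=\mathrm{CT}_{z}\,g(z,q)^k$, the constant term in $z$. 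The advantage of this form is that passing from $k$ to $pN+k$ colors merely raises $g$ to a higher power, so the entire question reduces to understanding $g^{pN}$ modulo $p$. The hypothesis of the theorem is exactly the statement that $U_{p,r}\,C\Phi_k\equiv 0\pmod p$, where $U_{p,r}$ denotes the operator extracting the terms in the progression $pn+r$.

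First I would apply the Frobenius endomorphism (the ``freshman's dream'') to the integral power series at hand. Since every monomial of $\theta$ has coefficient $1$, one obtains $\theta(z,q)^p\equiv\theta(z^p,q^p)\pmod p$, and likewise $(q;q)_\infty^p\equiv(q^p;q^p)_\infty$, so that $g(z,q)^p\equiv g(z^p,q^p)\pmod p$ and $g(z,q)^{pN}\equiv g(z^p,q^p)^N$, a power series in $z^p$ and $q^p$. Taking the constant term in $g(z,q)^{pN+k}\equiv g(z^p,q^p)^N g(z,q)^k$ then produces
$$C\Phi_{pN+k}(q)\equiv\frac{1}{(q^p;q^p)_\infty^{N}}\sum_{\ell\in\mathbb{Z}}R_\ell(q^p)\,B_{p\ell}(q)\pmod p,$$
where $B_j(q)=[z^j]\,g(z,q)^k$ are the ``defect $j$'' components (so $B_0=C\Phi_k$) and each $R_\ell$ is a power series in $q^p$. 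Because $U_{p,r}$ is linear over power series in $q^p$, the desired conclusion $c\phi_{pN+k}(pn+r)\equiv 0$ reduces to showing $U_{p,r}B_{p\ell}\equiv 0\pmod p$ for every $\ell$, using only that $U_{p,r}B_0\equiv 0$.

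To relate the defect components I would exploit the quasi-periodicity $\theta(zq,q)=z^{-1}q^{-1/2}\theta(z,q)$, which forces the recurrence $B_{j+k}(q)=q^{\,j+k/2}B_j(q)$, together with the reflection $B_{-j}=B_j$. These identities collapse each $B_{p\ell}$ into a power of $q$ times one of finitely many fundamental components $B_{j_0}$ with $j_0\equiv\pm p\ell\pmod k$. When $k\mid p\ell$ the attached $q$-power is a multiple of $p$, so the term is a $q^p$-multiple of $B_0=C\Phi_k$ and is annihilated by $U_{p,r}$ directly from the hypothesis; in particular, when $k=1$ or $k=p$ every defect is of this principal type and the argument closes at once, recovering the cleanest cases.

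The main obstacle is the non-principal components, those with $p\ell\not\equiv 0\pmod k$: after the reductions they are $q$-power shifts of fundamental theta pieces $B_{j_0}$ with $j_0\ne 0$, which are genuinely different $q$-series from $C\Phi_k$, and the shifts deposit their support squarely inside the progression $pn+r$ modulo $p$. Controlling these is the heart of the matter, and the single hypothesis on $B_0$ must somehow be converted into information about all $z$-graded pieces of $g(z,q)^k$ at once. A natural candidate is a roots-of-unity sieve in the variable $z$, namely $\sum_{\ell}B_{p\ell}(q)z^{p\ell}=\tfrac1p\sum_{\zeta^p=1}g(\zeta z,q)^k$, combined with the two recurrences above, in the hope that the vanishing of the principal slice propagates to the remaining fundamental components. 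Making this propagation rigorous while staying within the elementary $q$-series toolkit already invoked is the step I expect to be the most delicate.
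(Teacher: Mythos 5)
Your setup is correct as far as it goes, but it is not a proof, and the missing step is not the ``delicate'' technicality you hope it is --- it is the entire theorem.  (Note first that the paper you were given never proves this statement: it is quoted from Garvan and Sellers \cite{GarSel}, so the only standard of comparison is that published proof, and whatever it does must go beyond the formal manipulations in your proposal.)  Everything through your reduction is fine: Andrews' theta--quotient representation, the congruences $\theta(z,q)^p\equiv\theta(z^p,q^p)$ and $(q;q)_\infty^p\equiv(q^p;q^p)_\infty \pmod p$, the reduction of the claim to $U_{p,r}B_{p\ell}\equiv 0\pmod p$ for every $\ell$, and the observation that the hypothesis disposes of exactly the principal slices $k\mid p\ell$ (hence the cases $k=1$ and $k=p$).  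But for $\gcd(k,p)=1$ and $k>1$ the indices $p\ell$ sweep out \emph{every} residue class modulo $k$, so you need congruences for every slice $B_j=[z^j]\,g(z,q)^k$, and no purely formal argument can extract those from a hypothesis that concerns $B_0$ alone.  In particular the roots--of--unity sieve you propose is vacuous: $\frac1p\sum_{\zeta^p=1}g(\zeta z,q)^k$ \emph{is} $\sum_\ell B_{p\ell}(q)z^{p\ell}$ by definition, so it restates the decomposition and yields no new information about the non-principal slices.

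To see concretely that the gap is real, take $p=5$, $k=2$, $r=3$, $N=1$; the hypothesis holds, since $c\phi_2(5n+3)\equiv 0\pmod 5$ by Andrews.  In the integral normalization $\theta(z,q)=\sum_m z^mq^{m(m+1)/2}$ (which you should use anyway, as $q^{m^2/2}$ is not a power series in $q$), the slices of $\theta(z,q)^2$ are $S_{2t}=q^{t^2+t}\varphi(q)$ and $S_{2t+1}=2q^{(t+1)^2}\psi(q^2)$, where $\varphi(q)=\sum_{m\in\mathbb{Z}}q^{m^2}$ and $\psi(q^2)=\sum_{j\ge0}q^{j(j+1)}$.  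Carrying out your constant-term computation gives
\begin{equation*}
\sum_{n\ge0}c\phi_7(n)q^n\equiv\frac{1}{(q^5;q^5)_\infty(q;q)_\infty^2}\left(\varphi(q)\sum_{s\in\mathbb{Z}}q^{35s^2}\;+\;4\,\psi(q^2)\!\!\sum_{\substack{m\ge1\\ m\ \mathrm{odd}}}\!\!q^{(35m^2+1)/4}\right)\pmod 5,
\end{equation*}
where the first theta factor is supported on exponents $\equiv 0$ and the second on exponents $\equiv 4\pmod 5$.  The hypothesis kills the first term on the progression $5n+3$, and since the second theta factor is $q^9$ times an invertible series in $q^5$, the conclusion $c\phi_7(5n+3)\equiv 0\pmod 5$ is then \emph{equivalent} to the assertion $[q^{5n+4}]\,\psi(q^2)/(q;q)_\infty^2\equiv 0\pmod 5$ for all $n$.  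That assertion is true (the relevant coefficients are $25$, $420$, $4020,\dots$), but it is a statement about the slice $B_1$, logically independent of the assumed congruence for $B_0$: it must be proved separately, for instance by the same quadratic-residue analysis of the shifted quadratic form that proves the hypothesis itself.  So your reduction is sound, but closing it requires genuinely new arithmetic input for every non-principal slice, not a cleverer rearrangement of the hypothesis; any correct proof of the Garvan--Sellers theorem has to supply exactly that.
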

Our goal in this note is to follow a path similar to the above theorem of Garvan and Sellers, where an infinite family of values of $k$ is identified while the value of the modulus is fixed.  

It is clear, when one reviews the literature on the subject of congruences satisfied by generalized Frobenius partition functions, that the functions $c\phi_k(n)$ satisfy many more congruences than their counterpart functions $\phi_k(n).$  One might argue that this is true ``combinatorially'' given the structure of the objects being counted by these functions (and symmetries that are inherent in the two--rowed arrays counted by $c\phi_k(n)$). It is also true that, although the generating functions for each of these two families of functions are extremely similar, the presence of certain powers of roots of unity in the generating function for $\phi_k(n),$ and the corresponding absence of such roots of unity in the generating function for $c\phi_k(n),$ may contribute to the relative lack of congruences satisfied by $\phi_k(n).$  Whatever the case, our primary goal in this note is to alter this narrative by proving the following surprising result:  

\begin{theorem}
\label{mainresult} 
Let $\ell$ be a positive integer,  $p\geq 5$ be prime, and let  $r,$ $ 0< r < p,$ be an integer such that $24r+1$ is a quadratic nonresidue modulo $p.$ For all $n\geq 0,$
 $$
 \phi_{p\ell-1}(pn+r) \equiv 0 \pmod{2}.
 $$
\end{theorem}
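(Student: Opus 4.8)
The plan is to start from the explicit generating function for $\phi_k(n)$ recorded in Andrews' Memoir, namely a representation of the shape
$$\sum_{n\ge 0}\phi_k(n)q^n = \frac{\Theta_k(q)}{(q;q)_\infty^{\,k}},$$
where $\Theta_k(q)$ is the associated (roots-of-unity twisted) theta series; this is the ``lemma from the Memoir'' alluded to in the abstract. Since everything is to be read modulo $2$, I would first reduce this identity mod $2$ using the classical collapse $(q;q)_\infty^{2}\equiv(q^2;q^2)_\infty$ and Euler's pentagonal number theorem in the form $(q;q)_\infty\equiv\sum_{j}q^{j(3j-1)/2}\pmod 2$, the signs disappearing mod $2$.

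Two arithmetic features of the exponent $k=p\ell-1$ are what I would exploit: first, $-k=1-p\ell$, so that $(q;q)_\infty^{-k}=(q;q)_\infty\cdot(q;q)_\infty^{-p\ell}$ carries exactly one ``bare'' factor $(q;q)_\infty$; and second, $k+1=p\ell\equiv 0\pmod p$, which is what allows the $p$-th roots of unity sitting inside $\Theta_{p\ell-1}$ to drive a clean $p$-dissection. The aim of the middle of the argument is to combine these and prove that, modulo $2$,
$$\sum_{n\ge0}\phi_{p\ell-1}(n)q^n\equiv (q;q)_\infty\cdot A(q^p)\pmod 2$$
for some power series $A$; that is, after the single factor $(q;q)_\infty$ is split off, the remaining series $(q;q)_\infty^{-p\ell}\,\Theta_{p\ell-1}(q)$ should involve only powers of $q^p$.

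Granting this shape, the theorem follows quickly. Writing $(q;q)_\infty=\sum_m c_m q^m$ and $A(q^p)=\sum_t a_t q^{pt}$, the coefficient of $q^{pn+r}$ on the right is $\sum_t c_{\,p(n-t)+r}\,a_t$. By the pentagonal number theorem, every exponent $m$ with $c_m\neq 0$ satisfies $24m+1=(6j-1)^2$ for some $j$, a perfect square; hence $24m+1$ is a quadratic residue or $\equiv 0 \pmod p$. But $24\big(p(n-t)+r\big)+1\equiv 24r+1\pmod p$, and $24r+1$ is by hypothesis a quadratic nonresidue mod $p$ (in particular nonzero), so each $c_{\,p(n-t)+r}$ vanishes. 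Therefore $\phi_{p\ell-1}(pn+r)\equiv 0\pmod 2$.

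I expect the genuine obstacle to be the middle step: showing that, modulo $2$, the factor $(q;q)_\infty^{-p\ell}\,\Theta_{p\ell-1}(q)$ really is a series in $q^p$. This is where the full force of $p\ge 5$ and $k=p\ell-1$ must be spent, and where the precise shape of Andrews' theta series $\Theta_{p\ell-1}$, together with the $p$-th roots of unity made available by $p\mid(k+1)$, has to be analyzed with care; by contrast, the pentagonal/quadratic-residue endgame above is comparatively routine. A secondary technical point is to legitimize the mod-$2$ manipulations of $\Theta_{p\ell-1}$ in the presence of roots of unity, which I would handle by carrying out the algebra in $\mathbb{Z}[\zeta]$ and reducing only the resulting integer coefficients at the end.
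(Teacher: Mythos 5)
Your endgame is exactly the paper's and is carried out correctly: once the generating function is known to have the shape $(q;q)_\infty\cdot A(q^p)$ modulo $2$, the pentagonal number theorem forces every odd coefficient of $(q;q)_\infty$ to sit at an exponent $m$ with $24m+1=(6j-1)^2$, a perfect square, while $24\bigl(p(n-t)+r\bigr)+1\equiv 24r+1\pmod p$ is by hypothesis a nonresidue (this is also where $p\geq 5$ is spent, to make the completion of the square legitimate). The problem is that everything hinges on the congruence
$$\sum_{n\geq 0}\phi_{p\ell-1}(n)q^n\equiv (q;q)_\infty\, A(q^p)\pmod 2,$$
and this---which you yourself flag as ``the genuine obstacle''---is precisely the step you do not prove. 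As submitted, the proposal reduces the theorem to an unproved dissection claim, so it has a genuine gap at its core.

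Moreover, the mechanism you propose for closing that gap (a $p$-dissection of a roots-of-unity-twisted theta numerator $\Theta_{p\ell-1}$, spending ``the full force of $p\geq 5$ and $k=p\ell-1$'') is not where the collapse actually comes from, and carrying it out that way would be needlessly painful. The paper instead starts from Andrews' Theorem 7.1, which contains no roots of unity at all:
$$\Phi_k(q)=\frac{1}{(q;q)_\infty^2(q^{k+1};q^{k+1})_\infty}\sum\limits_{j,r = -\infty \atop r \geq (k+1)|j|}^{\infty}(-1)^{r+kj}q^{\binom{r+1}{2}-\binom{k+1}{2}j^2}.$$
Modulo $2$ the signs disappear, the terms $j$ and $-j$ of the inner sum coincide and hence cancel in pairs, leaving only $j=0$; the surviving sum $\sum_{r\geq 0}q^{\binom{r+1}{2}}$ equals $(q;q)_\infty^3$ modulo $2$ by Jacobi's identity, giving
$$\Phi_k(q)\equiv\frac{(q;q)_\infty}{(q^{k+1};q^{k+1})_\infty}\pmod 2$$
for \emph{every} $k\geq 1$---no $p$-dissection, no roots of unity, and no hypothesis on $p$ at this stage. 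The arithmetic of $k=p\ell-1$ then enters only through the trivial observation that $(q^{p\ell};q^{p\ell})_\infty$ is a series in $q^p$, so $A(q^p)=1/(q^{p\ell};q^{p\ell})_\infty$, and the hypotheses on $p$ and $r$ are consumed entirely by the endgame you already have. In short, your outline has the right architecture, but the missing middle step is both the whole content of the proof and substantially simpler than the route you sketched for it.
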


\section{Proof of Theorem \ref{mainresult}}

In order to prove Theorem \ref{mainresult}, we need a few preliminary facts.  First, we remind the reader of the $q$--Pochhammer symbol which is defined as follows:  
$$(A;q)_\infty = (1-A)(1-Aq)\cdots(1-Aq^{n})\cdots$$

We will also need the following two well--known results:  
\begin{theorem}
\label{eulerPNT}
\begin{equation*}
    (q;q)_\infty = \sum\limits_{k = -\infty}^\infty (-1)^k q^{\frac{3k^2-k}{2}}
\end{equation*}
\end{theorem}
\begin{proof}
See Hirschhorn 
\cite[(1.6.1)]{Hir}.  
\end{proof}

\begin{theorem}
\label{jacobi_cube} 
\begin{equation*}
    (q;q)_\infty^3 = \sum\limits_{k=0}^\infty (-1)^k(2k+1)q^{\frac{k^2+k}{2}}.
\end{equation*}
\end{theorem}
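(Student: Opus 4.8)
The plan is to derive this identity from the Jacobi triple product, which I take in the form
\begin{equation*}
(q;q)_\infty\,(z;q)_\infty\,(q/z;q)_\infty = \sum_{n=-\infty}^\infty (-1)^n z^n q^{\binom{n}{2}},
\end{equation*}
valid for $|q|<1$ and $z\neq 0$; this may be cited directly from Hirschhorn. As a sanity check that this is the right ambient identity, the substitution $q\mapsto q^3,\ z\mapsto q$ collapses it to Theorem \ref{eulerPNT}, so the triple product is a common parent of both standard results.

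First I would separate out the factor of the product that vanishes at $z=1$. Using $(z;q)_\infty=(1-z)(zq;q)_\infty$, the left-hand side equals $(1-z)\,(q;q)_\infty\,(zq;q)_\infty\,(q/z;q)_\infty$. Dividing by $1-z$ and letting $z\to 1$ sends this to $(q;q)_\infty^3$, since both $(zq;q)_\infty$ and $(q/z;q)_\infty$ tend to $(q;q)_\infty$.

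Next I would handle the series side. Write $S(z)=\sum_{n=-\infty}^\infty(-1)^n z^n q^{\binom{n}{2}}$ for the bilateral sum. The triple product forces $S(1)=0$ (the product vanishes at $z=1$ through the factor $(1;q)_\infty$), so
\begin{equation*}
\lim_{z\to1}\frac{S(z)}{1-z} = -S'(1) = -\sum_{n=-\infty}^\infty (-1)^n\, n\, q^{\binom{n}{2}}.
\end{equation*}
Equating the two limits yields $(q;q)_\infty^3=\sum_{n=-\infty}^\infty(-1)^{n+1}n\,q^{\binom{n}{2}}$. To reach the claimed form I would then exploit the symmetry $\binom{n}{2}=\binom{1-n}{2}$: pairing each $n\geq1$ with $1-n\leq0$ partitions $\mathbb{Z}$ and merges the two contributions into the coefficient $(-1)^n(1-2n)$, giving $(q;q)_\infty^3=\sum_{n\geq1}(-1)^n(1-2n)q^{(n^2-n)/2}$. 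The substitution $n=k+1$ then rewrites this as $\sum_{k\geq0}(-1)^k(2k+1)q^{(k^2+k)/2}$, which is exactly the asserted identity.

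The main obstacle I anticipate is justifying the $z\to1$ passage rigorously, i.e.\ legitimizing the term-by-term differentiation of the bilateral series together with the limiting behavior of the infinite products near $z=1$. For $|q|<1$ this is routine by uniform convergence on compact subsets avoiding $z=0$, but it is the only non-formal step; alternatively, one can avoid calculus altogether by comparing the coefficients of $(1-z)$ on both sides as formal Laurent series in $z$, which reduces the whole argument to the bookkeeping of the final pairing and reindexing.
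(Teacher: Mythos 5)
Your proof is correct, but it takes a genuinely different route from the paper: the paper's entire proof of Theorem \ref{jacobi_cube} is a one-line citation to Hirschhorn \cite[(1.7.1)]{Hir}, whereas you actually derive the identity from the Jacobi triple product. Your chain of steps checks out. Factoring $(z;q)_\infty=(1-z)(zq;q)_\infty$, dividing by $1-z$, and letting $z\to1$ gives $(q;q)_\infty^3$ on the product side; $S(1)=0$ by the pairing $n\leftrightarrow 1-n$ (since $\binom{n}{2}=\binom{1-n}{2}$ while the signs $(-1)^n$ and $(-1)^{1-n}$ are opposite), so the series side tends to $-S'(1)=\sum_{n}(-1)^{n+1}n\,q^{\binom{n}{2}}$; and your pairing followed by the shift $n=k+1$ correctly produces the coefficient $(-1)^k(2k+1)$, as a spot-check of $1-3q+5q^3-7q^6+\cdots$ confirms. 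What your approach buys is economy and self-containment: as you observe, the same triple product under $q\mapsto q^3$, $z\mapsto q$ also yields Theorem \ref{eulerPNT}, so a single parent identity would support both of the paper's background results, at the cost of the limiting argument at $z=1$, which you rightly flag as the only delicate point. Your proposed formal-series alternative is also sound and can be made precise cleanly: since $\binom{n}{2}=m$ has at most two integer solutions, the bilateral sum lies in $\mathbb{Z}[z,z^{-1}][[q]]$, each coefficient of $q^m$ being a Laurent polynomial in $z$ that vanishes at $z=1$; hence division by $1-z$ and the subsequent substitution $z=1$ are legitimate coefficientwise, avoiding all convergence questions. The paper's choice to cite rather than prove is defensible given that its main theorem only needs the identity as a black box, but your derivation would make the preliminaries self-contained modulo one citation instead of two.
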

\begin{proof}
See Hirschhorn 
\cite[(1.7.1)]{Hir}.  
\end{proof}

Next, we prove an extremely important fact about the generating function for $\phi_k(n)$ for all $k\geq 1$ using a key result that appears in Andrews 
\cite{AndMem}.

\begin{theorem}
\label{Mod2Lemma} 
Let $$
\Phi_k(q) = \sum_{n=0}^\infty \phi_k(n)q^n
$$
be the generating function for the generalized Frobenius partition function $\phi_k(n).$  Then 
$$
\Phi_k(q) \equiv \frac{(q;q)_\infty}{(q^{k+1};q^{k+1})_\infty} \pmod{2}.  
$$
\end{theorem}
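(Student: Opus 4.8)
The plan is to begin from the closed form for $\Phi_k(q)$ recorded in Andrews' Memoir and to push it through a reduction modulo $2$ by purely formal manipulations of $q$--series. The engine of the whole argument is the elementary congruence
$$f(q)^2 \equiv f(q^2) \pmod 2 \qquad (f \in \mathbb{Z}[[q]]),$$
which follows from $a^2 \equiv a \pmod 2$; specialized to $f = (q^a;q^a)_\infty$ it gives $(q^a;q^a)_\infty^2 \equiv (q^{2a};q^{2a})_\infty \pmod 2$ for every $a \geq 1$. I would record this first, since it is exactly what allows any even power of $(q;q)_\infty$ to be folded into a single product in the variable $q^2$.

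With that in hand, I would invoke Andrews' generating--function formula for $\Phi_k(q)$, the one key external input. It presents $\Phi_k(q)$ as a theta--type expression: a numerator built from a (multi-)theta series that carries the root--of--unity data distinguishing the $\phi_k$ family from $c\phi_k$, divided by a power of $(q;q)_\infty$. Reducing this modulo $2$, I would use two reductions in tandem: the squaring identity above to collapse the powers of $(q;q)_\infty$ in the denominator, and the symmetry of the numerator theta under negation of its summation indices, which forces terms to occur in $\pm$ pairs that vanish modulo $2$, leaving only a diagonal remnant. Theorems \ref{eulerPNT} and \ref{jacobi_cube} are the natural tools for identifying that remnant: modulo $2$ they read $(q;q)_\infty \equiv \sum_{m} q^{m(3m-1)/2}$ and $(q;q)_\infty^3 \equiv \sum_{m \geq 0} q^{m(m+1)/2}$, and these let me recognize the surviving theta as an honest infinite product rather than a sum.

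The target to land on is $\frac{(q;q)_\infty}{(q^{k+1};q^{k+1})_\infty} = \prod_{(k+1)\nmid n}(1-q^n)$, the product over parts not divisible by $k+1$, so the final move is to assemble the collapsed denominator and the surviving numerator and read off this product. The step I expect to be the real obstacle is this middle one: tracking exactly how the modulus $k+1$ is produced, i.e.\ showing that the $\pm$ cancellation in Andrews' theta, together with the $(k+1)$st--root--of--unity (equivalently, $(k+1)$--section) structure of his formula, deletes precisely the multiples of $k+1$ and nothing else. As a running sanity check I would test $k=1$, where $\Phi_1(q) = 1/(q;q)_\infty$ and indeed $\frac{(q;q)_\infty}{(q^{2};q^{2})_\infty} \equiv \frac{(q;q)_\infty}{(q;q)_\infty^{2}} = \frac{1}{(q;q)_\infty} \pmod 2$ by the squaring identity, which confirms the shape of the claimed answer.
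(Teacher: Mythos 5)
Your plan is essentially the paper's proof: the paper also starts from the closed form in Andrews' Memoir, kills the signs modulo $2$, cancels the off--diagonal terms of the theta sum in $\pm$ pairs, and identifies the surviving diagonal via Theorem \ref{jacobi_cube}. The one point worth correcting is the step you flagged as the likely obstacle, because it is not an obstacle at all. Andrews' Theorem 7.1 states
\begin{equation*}
\Phi_k(q) = \frac{1}{(q;q)_\infty^2\,(q^{k+1};q^{k+1})_\infty}\sum_{\substack{j,r=-\infty\\ r\geq (k+1)|j|}}^{\infty}(-1)^{r+kj}\,q^{\binom{r+1}{2}-\binom{k+1}{2}j^2},
\end{equation*}
so the factor $(q^{k+1};q^{k+1})_\infty$ appearing in the target is already present verbatim in the denominator; no ``deletion of the multiples of $k+1$'' and no $(k+1)$--section ever has to be engineered, and there are no genuine roots of unity left in this form of the formula --- only the sign pattern $(-1)^{r+kj}$ and the constraint $r\geq(k+1)|j|$. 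Modulo $2$ all signs become $1$; since the exponent depends on $j$ only through $j^2$, for each fixed $r$ the terms $j$ and $-j$ coincide, so the $j\neq 0$ part of the sum has even coefficients; the surviving $j=0$ diagonal is the $k$--independent series $\sum_{r\geq 0} q^{\binom{r+1}{2}} \equiv (q;q)_\infty^3 \pmod{2}$ by Theorem \ref{jacobi_cube}, and cancelling this against the $(q;q)_\infty^2$ in the denominator gives the claim. Note also that your squaring identity $f(q)^2\equiv f(q^2) \pmod{2}$, while correct, is never needed (the paper does not use it), and Theorem \ref{eulerPNT} enters only later, in the proof of Theorem \ref{mainresult}; your identity would merely provide an equivalent finish via $(q;q)_\infty^3\equiv (q;q)_\infty(q^2;q^2)_\infty \pmod{2}$.
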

\begin{proof}
From Andrews' Memoir 
\cite[Theorem 7.1]{AndMem},
we know 
\begin{equation*}
    \Phi_k(q) = \frac{1}{ (q;q)_\infty^2(q^{k+1};q^{k+1})_\infty}\sum\limits_{j,r = -\infty \atop r \geq (k+1)|j|}^{\infty}(-1)^{r+kj}q^{\binom{r+1}{2}-\binom{k+1}{2}j^2}.
\end{equation*}
Therefore, we have 
\begin{eqnarray*}
\Phi_k(q) 
&=& 
\frac{1}{ (q;q)_\infty^2(q^{k+1};q^{k+1})_\infty}\sum\limits_{j,r = -\infty \atop r \geq (k+1)|j|}^{\infty}(-1)^{r+kj}q^{\binom{r+1}{2}-\binom{k+1}{2}j^2}\\
&= & 
\frac{(q;q)_\infty}{ (q;q)_\infty^3(q^{k+1};q^{k+1})_\infty}\sum\limits_{r =0}^\infty q^{\binom{r+1}{2}} \sum\limits_{|j|\leq r/(k+1)}(-1)^{r+kj}q^{-\binom{k+1}{2}j^2}  \\
&\equiv & 
\frac{(q;q)_\infty}{ (q;q)_\infty^3(q^{k+1};q^{k+1})_\infty}\sum\limits_{r =0}^\infty q^{\binom{r+1}{2}} \sum\limits_{|j|\leq r/(k+1)}q^{-\binom{k+1}{2}j^2} \pmod{2} \\
&=& 
\frac{(q;q)_\infty}{ (q;q)_\infty^3(q^{k+1};q^{k+1})_\infty}\sum\limits_{r =0}^\infty q^{\binom{r+1}{2}}\left(1+2 \sum\limits_{1\leq j \leq r/(k+1)}q^{-\binom{k+1}{2}j^2} \right)   \\
&\equiv & 
\frac{(q;q)_\infty}{ (q;q)_\infty^3(q^{k+1};q^{k+1})_\infty}\sum\limits_{r =0}^\infty q^{\binom{r+1}{2}} \pmod{2} \\
&\equiv & 
\frac{(q;q)_\infty(q;q)_\infty^3}{ (q;q)_\infty^3(q^{k+1};q^{k+1})_\infty} \pmod{2} \\
\end{eqnarray*}
thanks to Theorem \ref{jacobi_cube}.  The result immediately follows.  
\end{proof}

We are now in an excellent position to prove Theorem \ref{mainresult}.  

\begin{proof}(of Theorem \ref{mainresult}) 
Thanks to Theorem \ref{Mod2Lemma}, we know that the generating function for $\phi_{p\ell-1}$ satisfies 
\begin{eqnarray*}
\sum_{n=0}^\infty \phi_{p\ell-1}(n)q^n 
&\equiv &
\frac{(q;q)_\infty}{(q^{p\ell};q^{p\ell})_\infty} \pmod{2} \\
&\equiv & 
\frac{1}{(q^{p\ell};q^{p\ell})_\infty} \sum\limits_{k = -\infty}^\infty q^{\frac{3k^2-k}{2}} \pmod{2}  
\end{eqnarray*}
where the last statement follows from Theorem \ref{eulerPNT}.  
Since $(q^{p\ell};q^{p\ell})_\infty$ is a function of $q^p,$ and since we are interested in the parity of the values $\phi_{p\ell-1}(pn+r)$ where $0<r<p,$ we simply need to determine when $$pn+r = \frac{3k^2-k}{2}$$ for some integer $k.$  After completing the square, this is equivalent to asking whether
\begin{align*}
    24r+1 & \equiv 36k^2 -12k + 1 \pmod{p}\\
    & = (6k-1)^2.
\end{align*}
However, we assumed that $24r+1$ is a quadratic nonresidue modulo $p$ in the statement of this theorem.   Therefore, $pn+r$ can never be represented as $\frac{3}{2}k^2-\frac{1}{2}k$ for any integer $k$. This implies that 
$$\phi_{p\ell-1}(pn+r) \equiv 0 \pmod{p}.$$
\end{proof}

\section{Concluding Remarks}  
It should be noted that a companion result to Theorem \ref{mainresult}  exists for the functions $c\phi_k(n)$ as well (which is not all that surprising).  

\begin{theorem} 
\label{cphi_mod2} 
For all $k\geq 1$ and all $n\geq 0,$
 $$
 c\phi_{2k}(2n+1) \equiv 0 \pmod{2}.
 $$
\end{theorem}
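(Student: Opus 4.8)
The plan is to establish for the functions $c\phi_k(n)$ a mod~$2$ simplification of the generating function entirely analogous to Theorem~\ref{Mod2Lemma}, and then to exploit the evenness of the subscript exactly as in the proof of Theorem~\ref{mainresult}. Writing $C\Phi_k(q)=\sum_{n=0}^\infty c\phi_k(n)q^n$, I would begin from Andrews' representation of this generating function as a single theta quotient (see \cite{AndMem}),
\begin{equation*}
C\Phi_k(q)=\frac{1}{(q;q)_\infty^{\,k}}\sum_{(m_1,\dots,m_{k-1})\in\mathbb{Z}^{k-1}} q^{\,Q(m_1,\dots,m_{k-1})},
\end{equation*}
where $Q$ is the positive--definite quadratic form $Q(m_1,\dots,m_{k-1})=\sum_{i=1}^{k-1}m_i^2-\sum_{i=1}^{k-2}m_im_{i+1}$ attached to the root lattice $A_{k-1}$. (For $k=1$ the inner sum is empty and the familiar identity $C\Phi_1(q)=1/(q;q)_\infty$ is recovered, while for $k=2$ it reduces to $C\Phi_2(q)=(q;q)_\infty^{-2}\sum_{m}q^{m^2}$, providing a useful sanity check.)

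The key step is to reduce the theta sum modulo~$2$. The involution $(m_1,\dots,m_{k-1})\mapsto(-m_1,\dots,-m_{k-1})$ preserves $Q$ and has the origin as its only fixed point. Hence, for every positive value assumed by $Q$, the representing vectors split into pairs $\{\mathbf m,-\mathbf m\}$, so each such value is attained an even number of times; and since $Q$ is positive definite, the value $0$ is attained exactly once, at $\mathbf m=\mathbf 0$. Therefore
\begin{equation*}
\sum_{\mathbf m\in\mathbb{Z}^{k-1}} q^{\,Q(\mathbf m)}\equiv 1 \pmod 2,
\end{equation*}
and I obtain the companion statement
\begin{equation*}
C\Phi_k(q)\equiv \frac{1}{(q;q)_\infty^{\,k}}\pmod 2,
\end{equation*}
valid for every $k\ge 1$.

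To finish, I specialize to the even subscript $2k$. Because $(1-q^m)^2\equiv 1-q^{2m}\pmod 2$, one has $(q;q)_\infty^{2}\equiv(q^2;q^2)_\infty\pmod 2$, whence $(q;q)_\infty^{2k}\equiv(q^2;q^2)_\infty^{k}\pmod 2$ and, after inverting these unit power series,
\begin{equation*}
C\Phi_{2k}(q)\equiv\frac{1}{(q^2;q^2)_\infty^{\,k}}\pmod 2.
\end{equation*}
The right-hand side is a power series in $q^2$, so every coefficient of an odd power of $q$ on the left is even; that is precisely $c\phi_{2k}(2n+1)\equiv 0\pmod 2$ for all $n\ge 0$.

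I expect the only genuine obstacle to lie in step one: pinning down Andrews' theta--quotient form of $C\Phi_k(q)$ with the correct quadratic form, and verifying that $Q$ is positive definite (equivalently, that $0$ is represented only by $\mathbf 0$), since this is exactly what guarantees that the constant term survives while all higher terms cancel in pairs. Once the representation and its positive definiteness are in hand, the involution argument and the mod-$2$ factorization of $(q;q)_\infty^{2k}$ are routine, and the conclusion follows immediately, mirroring the final step of the proof of Theorem~\ref{mainresult}.
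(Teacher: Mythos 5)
Your proposal is correct, but it takes a genuinely different route from the paper's proof. The paper never passes to Andrews' theta--quotient representation at all: it works upstream, with the two--variable product whose constant term in $z$ generates $c\phi_{2k}(n)$, namely $CG_{2k}(z) = \prod_{n\ge 0}(1+zq^{n+1})^{2k}(1+z^{-1}q^n)^{2k}$, and applies the freshman's--dream congruence $(1+x)^2 \equiv 1+x^2 \pmod 2$ directly to this product to get $CG_{2k}(z) \equiv \prod_{n\ge 0}(1+z^2q^{2n+2})^{k}(1+z^{-2}q^{2n})^{k} \pmod 2$; since the right--hand side is a series in $q^2$, so is its constant term in $z$, and the theorem follows with no need for the explicit quadratic form, its positive definiteness, or any cancellation argument. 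Your route instead starts from the extracted theta quotient, kills the numerator modulo $2$ by the involution $\mathbf m \mapsto -\mathbf m$, and applies the freshman's dream only to the denominator $(q;q)_\infty^{2k} \equiv (q^2;q^2)_\infty^k \pmod 2$. This is sound (one small remark: the form you wrote, $\sum_i m_i^2 - \sum_i m_im_{i+1}$, is the $A_{k-1}$ Cartan--basis form, whereas Andrews' representation is usually quoted with $\sum_i m_i^2 + \sum_{i<j} m_im_j$; these are Gram forms of the same lattice in different bases, so the theta series coincide, and in any case your argument needs only integrality and positive definiteness). What your approach buys is a genuinely stronger intermediate result, valid for every $k \ge 1$: $\sum_{n\ge 0} c\phi_k(n)q^n \equiv 1/(q;q)_\infty^{k} \pmod 2$, which is the exact companion to Theorem~\ref{Mod2Lemma} and yields parity information for odd subscripts too; what the paper's approach buys is brevity and complete independence from the theta--quotient machinery. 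Incidentally, your argument is close in spirit to the proofs the paper alludes to in the literature (Garvan--Sellers, Baruah--Sarmah), while the paper's own proof is the more self--contained of the two.
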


In a real sense, the proof of this result already appears in the literature; see, for example, Garvan and Sellers 
\cite{GarSel}
as well as Baruah and Sarmah 
\cite{BarSar2}. 
For completeness' sake, we provide a proof here.  

\begin{proof} 
The generating function for $c\phi_{2k}(n),$ as provided by Andrews 
\cite{AndMem},
is the constant term in 
$$
CG_{2k}(z) = \prod_{n=0}^\infty(1+zq^{n+1})^{2k}(1+z^{-1}q^n)^{2k}.
$$
Thanks to the Binomial Theorem, we have 
\begin{align*}
    CG_{2k}(z) 
    &= \prod_{n=0}^\infty(1+zq^{n+1})^{2k}(1+z^{-1}q^n)^{2k}\\
    & \equiv \prod_{n=0}^\infty(1+z^2q^{2n+2})^{k}(1+z^{-2}q^{2n})^{k} \pmod{2}.
\end{align*}
The theorem immediately follows because the last product above is a function of $q^2.$
\end{proof}

\end{document}